\theoremstyle{plain}
\newtheorem{thm}{\sc {\bf Theorem}}[section]
\newtheorem{lem}[thm]{\sc {\bf Lemma}}
\newtheorem{coro}[thm]{\sc {\bf Corollary}}
\newtheorem{prop}[thm]{\sc {\bf Proposition}}
\theoremstyle{definition}
\newtheorem{rem}[thm]{\sc {\bf Remark}}
\newtheorem{defi}[thm]{\sc {\bf Definition}}
\newcommand{\trace}{{\rm trace}}
\newcommand{\dive}{{\rm div}}
\newcommand{\R}{{\mathbb R}}
\font\sc=cmcsc10
\begin{document}

\title {On the stability of constant higher order mean curvature hypersurfaces in a  Riemannian manifold}
\author{Maria Fernanda Elbert and Barbara Nelli}
\date{}
\maketitle

 \let\thefootnote\relax\footnote{ Keywords: r-mean curvature, elliptic operator, stability, eigenvalues.\\
\indent \indent 2000 Mathematical Subject Classification: 53C42, 53A10.\\
 \indent \indent The authors were partially supported by INdAM-GNSAGA and CAPES-Brasil - Print UFRJ - 2668/2018/88881.311616/2018-01.}

%
%
%
%
%

\begin{abstract}

{\em We propose a notion of stability for constant $k$-mean curvature hypersurfaces in a general Riemannian manifold and we give some applications. When the ambient manifold is a Space Form, our notion coincides with the known one, given by means of the variational problem. Our approach led us to work with two different stability operators and we are able to relate stability to the study of the respective first eigenvalues. Moreover, we prove that embedded rotational spheres with constant $k$-mean curvature in ${\mathbb H}^n\times{\mathbb R}$ or in ${\mathbb S}^n\times{\mathbb R}$ are not stable. }

\end{abstract}

\section*{Introduction}

Let $x:M^{n}\longrightarrow \bar{M}^{n+1}$ be an isometric immersion
of an orientable connected Riemannian $n$-manifold into an oriented Riemannian n+1-manifold
 and let
$A:T_{p}M\longrightarrow T_{p}M$ be the linear operator associated to 
the second fundamental form of $x$. Denote by $\kappa_{1}, \kappa_{2},..., \kappa_{n}$ its 
eigenvalues, namely the principal curvatures of $x$. The 
elementary symmetric functions of $\kappa_{1}, \kappa_{2},..., \kappa_{n},$ say
$$\begin{array}{ll}
H_{0}=1,&\\[8pt]
H_{k}={\displaystyle \sum_{i_{1}<...<i_{k}}\kappa_{i_{1}}... \kappa_{i_{k}}},\hspace{1cm}&
(1\leq k\leq n),\\
H_{k} = 0,&(k>n)
\end{array}$$
are known as the (non normalized)   {\em r-mean curvatures of $x$}.

A hypersurface with  constant ${k}$-mean curvature is called a {\em $H_{k}$-hypersurface.} Notice that $H_1$ is the non normalized mean curvature and then a $H_{1}$-hypersurface is a constant mean curvature hypersurface.

 Many examples of $H_k$-hypersurfaces have been constructed over the years. 
For example, W. Y. Hsiang \cite{Hs}, M. L. Leite \cite{Le} and O. Palmas \cite{P} described 
$H_k$-hypersurfaces invariant by rotations in Space Forms (M. L. Leite for $k=2$). The first author and R. Sa  Earp \cite{ElSa}, the authors and W. Santos \cite{ENS}, R. F. De Lima, F. Manfio, J. P. Do Santos \cite{FMS}, the second author, G. Pipoli and G. Russo \cite{NPR}, described $H_k$-hypersurfaces, including $H_k=0,$ invariant by rotations in some product spaces.
Properties of $H_k$-hypersufaces either in Space Forms or in product spaces were studied, for example, in \cite{AIR, BrEi, ElNe, ENS, Ro}. Looking closely at examples and properties of $H_{k}$-hypersurfaces one can see that they share many features with constant mean curvature hypersurfaces.

 We recall that  a  $H_{k}$-hypersurface in a Space Form is a critical point for a modified area functional.  This was proved in  R. Reilly 's pioneer article \cite{Re} in  the Eclidean case, while J. L. Barbosa and G. Colares \cite{BC}  proved it in a general Space Form. Notice that the modified area functional coincides with the area functional when $k=1.$

 We point out that  a similar variational
characterization of $H_k$-hypersurfaces in a general Riemannian manifold is not known. It is worth noticing that the very definition of the modified area functional in \cite{BC}  involves the constant sectional curvature of the ambient space.

 Inspired by \cite{BGM}, where the authors address stability for prescribed mean curvature hypersurfaces, we  propose a notion of  stability for $H_{k}$-hypersurfaces in a general Riemannian manifold.   A similar approach was used to deal with the stability of marginally outer trapped surfaces, the so called MOTS (\cite{AEM, AMS, AM, GS}).

Here and in these latter cases, the definition of stability is given in terms of the associated stability operator T, instead of that given by means of a variational problem. The operator T  is not self-adjoint, but we are also able to introduce a symmetrized stability operator. 
With both definitions at handy, we are able  to address stability questions of $H_k$-hypersurface beyond  the framework of Space Forms.
For example,  we prove the following result analogous to the Space Forms case.

\

{\em 
Embedded rotational $H_{k}$-spheres in ${\mathbb H}^n\times{\mathbb R}$ or in ${\mathbb S}^n\times{\mathbb R}$ are not (weakly) stable. 
}

\

The paper is organized as follows. In Section 1, we recall the basic concepts related to the higher order mean curvature of a hypersurface. In Section 2, after recalling the notion of stability for $H_k$-hypersurfaces in Space Forms, we define our notion of stability related to a non-self adjoint {\it stability} operator and explore the corresponding eigenvalue theory. In Section 3, we obtain the {\it symmetrized stability operator}, propose an associated {\it symmetric stability} definition and highlight the relation between both stability definitions. Finally, in Section 4, we give some applications.

 \section{Preliminaries}
\label{prelim}

 Let $x:M^{n}\longrightarrow \bar{M}^{n+1}$ be an isometric immersion  
of an orientable connected Riemannian $n$-manifold into an oriented Riemannian $n+1$-manifold
 and let
$A$ be the second fundamental form of $x$. 
Let $e_{1},e_{2},...,e_{n}$ be orthonormal eigenvectors of $A$ 
corresponding, respectively, to the eigenvalues $\kappa_{1}, \kappa_{2},..., \kappa_{n}$.

The study of the higher order mean curvatures is related to the classical Newton transformations $P_{r}$ defined inductively by
$$\begin{array}{l}P_{0}= I,\\
P_{r}=H_{r}I-AP_{r-1},\\
\end{array}$$
here $I$ is the identity matrix. Each $P_{r}$ is a self-adjoint operator
 that has the same eigenvectors of $A$.

Lemma \ref{eleerre} below establishes some conditions to be satisfied by the immersion in order to guarantee that
 $P_{r}$ is definite.

\begin{lem} 

\ 

\begin{description}
\item[(a)]  If $r=1$ and $H_{2}>0$  then, after a proper choice of the orientation of the immersion,   $P_1$ is positive definite;
\item[(b)] If $H_{r+1}>0$, $r>1$,  and if there exists a point $q$ in $M$ where all the principal 
curvatures of $x$ are positive, then $P_r$ is positive definite;
\item[(c)]If $H_{r+1}=0$ and $rank(A)>r$ then, $P_r$ is positive or negative definite.
\end{description}
\label{eleerre}
\end{lem}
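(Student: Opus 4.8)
The plan is to observe, via Lemma~\ref{calculos}(i), that the eigenvalues of $P_r$ are exactly the numbers $S_r(A_i)$, $1\le i\le n$, and then to control their signs case by case, using repeatedly the elementary identity
$$S_j(A_i)=S_j-k_iS_{j-1}(A_i)\qquad(0\le j\le n),$$
which follows at once from $P_r=S_rI-AP_{r-1}$ and Lemma~\ref{calculos}(i); in particular $S_{r+1}(A_i)=S_{r+1}-k_iS_r(A_i)$. For (a): since $S_2=\binom{n}{2}H_2>0$ one has $S_1^2=\sum_i k_i^2+2S_2>k_j^2$ for every $j$, so $S_1\neq0$ and $|S_1|>|k_j|$; choosing the orientation so that $S_1>0$ (which leaves $S_2$ unchanged), the eigenvalues $S_1(A_i)=S_1-k_i$ of $P_1$ are all positive.

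For (b): the eigenvalues of $P_r$ are the partial derivatives $S_r(A_i)=\partial S_{r+1}/\partial k_i$. Given any $p\in M$, join it to the distinguished point $q$ by a path in the connected manifold $M$; along this path the principal curvatures vary continuously and produce a curve $\kappa(t)\in\R^{n}$ with $\kappa(0)$ in the open positive cone and $S_{r+1}(\kappa(t))>0$ throughout. Since the G\aa rding cone $\Gamma_{r+1}=\{S_1>0,\dots,S_{r+1}>0\}$ is a connected component of $\{S_{r+1}>0\}$ and contains the positive cone, $\kappa(t)$ stays in $\Gamma_{r+1}$; and on $\Gamma_{r+1}$ deleting the $i$-th entry yields a point of $\Gamma_r$, so $S_r(A_i)>0$ for every $i$, by the standard structure theory of these cones (see \cite{BC}). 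Hence $P_r(p)$ is positive definite, and $p$ was arbitrary.

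For (c), now with $S_{r+1}\equiv0$, the core is a purely algebraic nonvanishing statement: \emph{if $a_1,\dots,a_m\in\R$ have at least $r+1$ nonzero entries and $S_{r+1}(a)=0$, then $S_r(a)\neq0$.} Indeed, let $g(t)=\prod_{a_\ell\neq0}(t+a_\ell)$; this is a polynomial of degree $\rho\ge r+1$ with all roots real and nonzero, whose coefficients are the $S_\ell(a)$ for $\ell\le\rho$. A real-rooted polynomial with nonzero constant term has all of its derivatives vanishing at the origin to order at most one (induct on the order of the derivative, using that $(Q'/Q)'<0$ at any point that is not a root of the real-rooted polynomial $Q$); applying this to $g^{(\rho-r-1)}$, whose constant term is a positive multiple of $S_{r+1}(a)=0$ and whose linear coefficient is a positive multiple of $S_r(a)$, forces $S_r(a)\neq0$. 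Applied to the principal curvatures at each point this gives $S_r\neq0$ on $M$; applied to the tuple $\{k_\ell\}_{\ell\neq i}$ — whose number of nonzero entries is $\ge\operatorname{rank}(A)-1\ge r$, and which, when that number equals $r$, has $S_r(A_i)$ equal to a product of $r$ nonzero numbers — together with $S_{r+1}(A_i)=-k_iS_r(A_i)$ it gives $S_r(A_i)\neq0$ for every $i$. Thus $P_r$ is nonsingular everywhere, and $\varepsilon:=\operatorname{sign}S_r$ is constant on the connected $M$.

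It remains to show $\varepsilon S_r(A_i)>0$. I feed Newton's inequalities for the $n-1$ numbers $\{k_\ell\}_{\ell\neq i}$ at index $r$ (legitimate because $\operatorname{rank}(A)>r$ together with $S_{r+1}=0$ forces $r\le n-2$, since $r=n-1$ would give $S_n=\prod_ik_i=0$, i.e. $\operatorname{rank}(A)<n$) into the identities above: Newton gives $S_r(A_i)^2\ge\beta\,S_{r-1}(A_i)S_{r+1}(A_i)$ with $\beta=\binom{n-1}{r}^{2}/\bigl(\binom{n-1}{r-1}\binom{n-1}{r+1}\bigr)\ge1$ by log-concavity of the binomial coefficients, and substituting $S_{r+1}(A_i)=-k_iS_r(A_i)$ and $k_iS_{r-1}(A_i)=S_r-S_r(A_i)$ turns it into $\beta\,S_rS_r(A_i)\ge(\beta-1)S_r(A_i)^2\ge0$; since $S_r$ and $S_r(A_i)$ are nonzero, this forces $S_r(A_i)$ to have the sign of $S_r$, so $\varepsilon P_r$ is positive definite. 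The delicate point throughout is the nonvanishing in (c): without it the Newton computation yields only $S_rS_r(A_i)\ge0$, which still permits a degenerate $P_r$, and the real-rootedness argument is precisely where the strict hypothesis $\operatorname{rank}(A)>r$ (rather than $\ge r$) is genuinely used; a second, milder, reliance on external facts is the structure of G\aa rding's cone invoked in (b).
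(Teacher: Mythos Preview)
Your argument is correct in all three parts, and it goes well beyond what the paper does: the paper's own ``proof'' of this lemma consists entirely of the sentence that (a) and (b) are well known and that (c) is Corollary~(2.3) of Hounie--Leite \cite{HL}. So you have supplied a genuine, essentially self-contained proof where the authors only give pointers to the literature.

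A brief comparison. For (a) your elementary computation $S_1^2=\sum_ik_i^2+2S_2>k_j^2$ is exactly the standard one-line argument. For (b) you invoke the fact that the G\aa rding cone $\Gamma_{r+1}$ is a connected component of $\{S_{r+1}>0\}$ and that $S_r(A_i)=\partial S_{r+1}/\partial k_i>0$ on it; this is precisely the content of the ``well known'' result the authors allude to (it is the argument in \cite{BC}), so here you and the paper are aligned, you have just unpacked the citation. For (c) your route---the ``no two consecutive coefficients of a real-rooted polynomial with nonzero constant term can vanish'' lemma, combined with Newton's inequality and the identity $S_{r+1}(A_i)=-k_iS_r(A_i)$ to force $S_rS_r(A_i)>0$---is a clean algebraic proof in the spirit of, but not identical to, the Hounie--Leite argument. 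Your proof has the virtue of making transparent exactly where the strict hypothesis $\operatorname{rank}(A)>r$ enters (namely, in the nonvanishing step), which the bare citation obscures. The presentation of (c) is rather compressed: the case split behind ``applied to the tuple $\{k_\ell\}_{\ell\neq i}$ \ldots\ together with $S_{r+1}(A_i)=-k_iS_r(A_i)$'' (namely: if $S_r(A_i)=0$ then also $S_{r+1}(A_i)=0$, and then either the tuple has $\ge r+1$ nonzero entries and your lemma applies, or it has exactly $r$ and $S_r(A_i)$ is a nonzero product) would benefit from being written out, but the logic is sound.
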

\begin{proof}
The statements  (a) and  (b) are well known. The proof of (c) can be found
in \cite[Corollary (2.3)]{HL}.
\end{proof}

 For later use, it is useful to recall some notation and results about stability related to  the variational approach.

Let $\bar{M}^{n+1}$ be an oriented Riemannian 
$(n+1)$-manifold  and let $x:M^{n}\longrightarrow \bar{M}^{n+1}$ be an isometric immersion of a
compact with boundary $\partial M$ (possibly empty) connected oriented Riemannian $n$-manifold into $\bar{M}^{n+1}$. 
 By a {\em variation of $x$} we mean a differentiable
map $X:(-\varepsilon,\varepsilon)\times M\longrightarrow
\bar{M}^{n+1}$, $\varepsilon >0$, such that for each $t\in 
(-\varepsilon,\varepsilon)$ the map  $X_{t}:\{t\}\times M
\longrightarrow\bar{M}^{n+1}$   defined by $X_{t}(p)=X(t,p)$ is an immersion, $X_{0}=x$, and $X_t|_{\partial M}=X|_{\partial M}$. Set
$$
E_{t}(p)=\frac{\partial X}{\partial t}(t,p)\;\;\;
\mbox{and}\;\;\;f_{t}=\left<E_{t},N_{t}\right >,
$$
where $N_{t}$ is the unit normal vector field in $X_{t}$. E is the 
{\em variational vector field} of $X$.

\

Denote by  $(.)^{T}$ and $(.)^{N}$  the tangent
and normal component to $M,$ respectively,  and  by $\bar{\nabla}$ and
$\nabla$, the connection of $\bar{M}$ and the connection of $M$ 
in the metric induced by $\phi_{0}$, respectively.  Also denote by  $A(t)$  the second fundamental form of $X_{t}$. 
The following results hold.

\begin{lem} {{\rm (\cite[Lemma (3.1)]{E})}}
$$ A'(t)=Hessf+f\bar{R}_{N}+fA^{2}.$$
Here, $\bar{R}_{N}(Y)=\big(\bar{R}(Y,N)N)^T$, where
$\bar{R}$ is the curvature tensor of $\bar{M}^{n+1}$.
\label{var1}
\end{lem}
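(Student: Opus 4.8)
We sketch a proof of this formula (it is \cite[Lemma (3.1)]{E}). The plan is to differentiate, in $t$, the identity defining the shape operator of $\phi_{t}$ and then to read off its tangential part. Since the symmetric functions $S_{r}$, and hence the $H_{r}$, depend only on the normal component of the variational field, it suffices to prove the formula for a \emph{normal} variation, i.e.\ one with $E_{t}=f_{t}N_{t}$; a general variation differs from a normal one by a tangential reparametrization, which adds to $A'(t)$ a Lie-derivative term along $E^{T}$ that does not affect the subsequent computations. So I would assume $E=fN$ from the outset. Fixing $p\in M$ and a \emph{coordinate} vector field $X$ near $p$, so that $[\partial_{t},X]=0$, I would start from
\[
\phi_{t*}\big(A(t)X\big)=-\bar{\nabla}_{X}N_{t},
\]
regarded as an identity between vector fields along $\phi$, and apply $\bar{\nabla}_{\partial_{t}}$ to both sides.

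The first step is the variation of the unit normal. From $\langle N_{t},\phi_{t*}X\rangle\equiv0$, the identity $\bar{\nabla}_{\partial_{t}}\phi_{t*}X=\bar{\nabla}_{X}E$ (which follows from $[\partial_{t},X]=0$), and $E=fN$, one computes
\[
\langle\bar{\nabla}_{\partial_{t}}N,\phi_{t*}X\rangle=-\langle N,\bar{\nabla}_{X}(fN)\rangle=-Xf ;
\]
since $|N_{t}|\equiv1$ forces $\bar{\nabla}_{\partial_{t}}N$ to be tangent, this gives $\bar{\nabla}_{\partial_{t}}N=-\nabla f$ (under the identification by $\phi_{t*}$).

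The second step is to assemble the two sides of the differentiated identity. On the right, commuting covariant derivatives via $[\partial_{t},X]=0$ and inserting the previous formula,
\[
-\bar{\nabla}_{\partial_{t}}\bar{\nabla}_{X}N=-\bar{\nabla}_{X}\bar{\nabla}_{\partial_{t}}N-\bar{R}(E,X)N=\bar{\nabla}_{X}\nabla f-f\,\bar{R}(N,X)N ,
\]
whose tangential component is $\mathrm{Hess}\,f(X)+f\,\bar{R}_{N}(X)$, using $(\bar{\nabla}_{X}\nabla f)^{T}=\nabla_{X}\nabla f$ and $(\bar{R}(N,X)N)^{T}=-\bar{R}_{N}(X)$. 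On the left, setting $W=A(t)X$ and differentiating the pushforward, one gets $\bar{\nabla}_{\partial_{t}}(\phi_{t*}W)=\phi_{t*}\big(A'(t)X\big)+\bar{\nabla}_{W}E$, and since $\bar{\nabla}_{W}E=\bar{\nabla}_{W}(fN)=(Wf)N-f\,\phi_{t*}(A^{2}X)$, the tangential component of the left side is $\phi_{t*}\big(A'(t)X-fA^{2}X\big)$. Comparing the two tangential components gives $A'(t)X=\mathrm{Hess}\,f(X)+f\,\bar{R}_{N}(X)+fA^{2}X$, which is the assertion.

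The only real difficulty is bookkeeping: one has to keep careful track of which connection is being used ($\bar{\nabla}$ on $\bar{M}$ versus $\nabla$ on $M$, the latter itself depending on $t$ through the induced metric) and of the identifications made by $\phi_{t*}$. In fact the term $fA^{2}$ is precisely the contribution of the fact that $\bar{\nabla}_{\partial_{t}}$ does \emph{not} commute with the time-dependent pushforward $\phi_{t*}$; overlooking this is the quickest way to drop a term. The one other point that deserves a line of justification is the reduction to normal variations, namely that the extra tangential term is immaterial for the applications in view.
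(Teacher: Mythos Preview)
The paper does not actually supply a proof of this lemma: it is simply quoted from \cite[Lemma (3.1)]{E}, with the remark that the space-form case is in \cite{Ro}. So there is nothing in the present paper to compare your argument against line by line.

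That said, your sketch is the standard derivation and is correct. You differentiate the defining relation $\phi_{t*}(A(t)X)=-\bar\nabla_X N_t$ along a normal variation, compute $\bar\nabla_{\partial_t}N=-\nabla f$, commute $\bar\nabla_{\partial_t}$ and $\bar\nabla_X$ via the curvature tensor, and read off tangential parts; the $fA^2$ term arises exactly where you say it does, from $\bar\nabla_{AX}(fN)$. This is precisely the computation one finds in \cite{E} and, in the constant-curvature case, in \cite{Ro}. Your remark that for a general (non-normal) variation one picks up an extra Lie-derivative term $\mathcal L_{E^T}A$, irrelevant for the variation of $S_{r+1}$, is also the correct caveat; the lemma as stated is really the formula for normal variations, which is all that is used downstream in Proposition~\ref{sr}.
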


\begin{rem}
 In [Ro] (cf. Formula (3.4)) one can find a proof of Lemma \ref{var1} when $\bar{M}^{n+1}$ is a Space Form.

\end{rem}

\begin{prop}
\label{proposition-variation}
We have:\begin{description} 
    \item[(a)] $\trace(P_r A^2)=H_{1}H_{r+1}-(r+2)H_{r+2}$
    \item[(b)] $\frac{\partial}{\partial t}(H_{r+1})=L_{r}(f)+f\trace(P_r A^2)
+f\trace(P_{r}\bar{R}_{N}),$ where $L_r(f)=tr(P_r(Hess f))$.
\end{description}
\label{sr}
\end{prop}

 The proof of (a) is a standard computation and the proof of (b) can be seen in {{\rm (\cite[Proposition (3.2)]{E})}}

 \

We notice that $L_{r}$ written as
 
 \begin{equation}
L_{r}(f)=\dive(P_{r}\nabla f)-\left<\trace(\nabla P_r),\nabla f\right>,
\label{CY}
\end{equation}
where $\nabla f$ is the gradient of $f$, $\nabla P_r$ is the covariant derivative of the tensor $P_r$, $\trace(\nabla P_r)$ is the trace with respect to the metric of $M$  and ${\rm div (.)}$ is the divergence operator  on $M$.

\

Now we recall some properties and results concerning  an elliptic self-adjoint
linear differential operator of second order  $T.$ By elliptic, we mean that the matrix of the principal part is positive definite.

 A domain $D\subset M$ is an open connected subset with compact closure and smooth boundary.  In the case $M$ is compact without boundary, $D$ is allowed to be equal to $M$.
Denote by $C_{0}^{\infty}(D)$  the set of 
smooth functions which are zero on $\partial D$. 

\ 

 We recall that the first eigenvalue 
$\lambda_{1}(T,D)$ of $T$ in $D$ is defined 
as the smallest $\lambda$ that satisfies
\begin{equation}
T(g)+\lambda g=0,\;\;\;\;g\in C_{0}^{\infty}(D) 
\label{tres}
\end{equation}
where $g$ is a non-identically zero function. A  non-identically zero function $g$ in 
$C_{0}^{\infty}(D)$ that satisfies (\ref {tres}) 
for $\lambda=\lambda_{1}(T,D)$ is called a first eigenfunction of $T$ in $D$.

\

\

The following lemma is a standard result. 

\begin{lem} 
 $$
\lambda_{1}(T, D)=\inf\left\{\frac{-\int_{D}fT(f)dM}
 {\int_{D}f^{2}dM};f\in H_0^{1}(D)
,f\equiv\hspace*{-.38cm} /\; 0\right\},
$$
where $H_0^{1}(D)$ is the first Sobolev Space over $D$.
\label{minmax}
\end{lem}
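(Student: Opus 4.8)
The plan is to prove the Rayleigh-quotient characterization of $\lambda_1^T(D)$ in the standard two-step manner: first show that the infimum on the right-hand side is attained by a function that is a weak, hence (by elliptic regularity) classical, eigenfunction; then identify the corresponding eigenvalue with $\lambda_1^T(D)$. Write $Q(f)=-\int_D f\,T(f)\,dM$; since $T$ is elliptic and self-adjoint, integration by parts rewrites $Q(f)=\int_D \langle P\nabla f,\nabla f\rangle\,dM + \int_D q\,f^2\,dM$ for the positive-definite principal-part tensor $P$ and the zero-order coefficient $q$ of $T$, so $Q$ extends continuously to a bilinear form on $H^1(D)$ that is coercive modulo a multiple of $\|f\|_{L^2}^2$. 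Denote the infimum in the statement by $\mu$. First I would take a minimizing sequence $f_k\in C_c^\infty(D)$ normalized by $\int_D f_k^2=1$ with $Q(f_k)\to\mu$; coercivity of $Q$ plus the $L^2$ normalization bounds $\|f_k\|_{H^1}$, so after passing to a subsequence $f_k\rightharpoonup f$ weakly in $H^1(D)$, and by Rellich--Kondrachov $f_k\to f$ strongly in $L^2(D)$, whence $\int_D f^2=1$ and in particular $f\not\equiv 0$. Weak lower semicontinuity of the (nonnegative-principal-part) quadratic form $Q$ then gives $Q(f)\le\liminf Q(f_k)=\mu$, while $f\in H^1(D)$ forces $Q(f)\ge\mu\int_D f^2=\mu$; hence $Q(f)=\mu$ and the infimum is attained at $f$.

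Next I would derive the Euler--Lagrange equation: for every $\varphi\in C_c^\infty(D)$ the function $t\mapsto Q(f+t\varphi)/\int_D(f+t\varphi)^2$ has a minimum at $t=0$, and differentiating gives $\int_D\langle P\nabla f,\nabla\varphi\rangle + \int_D q\,f\varphi = \mu\int_D f\varphi$ for all such $\varphi$, i.e. $f$ is a weak solution of $T(f)+\mu f=0$. Standard interior elliptic regularity (the coefficients of $T$ being smooth and $P$ positive definite) upgrades $f$ to a smooth solution on $D$, and since $f\in H^1(D)$ is the $H^1$-limit of functions in $C_c^\infty(D)$ it has the appropriate boundary behavior, so $f$ is an admissible eigenfunction with eigenvalue $\mu$. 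By the definition \eqref{tres} of $\lambda_1^T(D)$ as the smallest $\lambda$ admitting a nonzero solution of $T(g)+\lambda g=0$, this yields $\lambda_1^T(D)\le\mu$.

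For the reverse inequality, I would take any eigenvalue $\lambda$ with a nonzero eigenfunction $g\in C_0^\infty(D)\subset H^1(D)$ solving $T(g)+\lambda g=0$; multiplying by $g$, integrating over $D$, and using that $g$ is admissible in the variational problem gives $-\int_D g\,T(g)\,dM = \lambda\int_D g^2\,dM$, hence $\lambda = Q(g)/\int_D g^2 \ge \mu$. Taking the infimum over all such $\lambda$ gives $\lambda_1^T(D)\ge\mu$, and combined with the previous paragraph $\lambda_1^T(D)=\mu$, which is the assertion. The main obstacle — really the only nonroutine point — is the compactness step: one must know that the quadratic form $Q$ is coercive on $H^1(D)$ up to the $L^2$ term (which uses ellipticity of $T$, i.e. positive definiteness of the principal part, together with boundedness of $\bar D$ so the zero-order term is controlled), and then invoke the Rellich--Kondrachov embedding $H^1(D)\hookrightarrow\hookrightarrow L^2(D)$ valid because $\bar D$ is compact with smooth boundary; everything else is a formal computation or a citation to standard elliptic theory. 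I would remark that one may alternatively avoid the regularity discussion altogether and read the lemma purely as the identity between $\lambda_1^T(D)$, defined spectrally, and the Rayleigh infimum, in which case the proof reduces to the last paragraph together with density of $C_c^\infty(D)$ in $H^1(D)$.
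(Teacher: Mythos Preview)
Your argument is correct: it is the standard direct-method proof of the Rayleigh--quotient characterization for a second-order self-adjoint elliptic operator, and the only places requiring care (coercivity modulo $L^2$, Rellich--Kondrachov on the compact $\bar D$, and interior plus boundary elliptic regularity to land the minimizer back in $C_0^\infty(D)$) are exactly the ones you flag. One small point worth making explicit is boundary regularity: to conclude $\lambda_1^T(D)\le\mu$ via the paper's definition \eqref{tres} you need the minimizer in $C_0^\infty(D)$, not merely smooth in the interior with zero trace, and this requires the smoothness of $\partial D$ assumed in the paper.

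As for comparison with the paper: the paper does not give its own proof of this lemma at all --- it simply refers the reader to \cite{Sm}, Lemma~(4)(a). Your write-up therefore supplies what the paper omits, and is essentially the argument one finds in that reference or in any standard text on spectral theory of elliptic operators.
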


\

\begin{prop}  {{\rm (\cite[Proposition (3.13)]{E}, \cite[Theorem 1]{FCS})}}Suppose that $M^n$ is complete and noncompact and that $T$ is a second order self-adjoint elliptic operator.
The following statements are equivalent:
\begin{description}
\item[(a)] $\lambda_{1}(T, D)\geq 0$ for every domain $D\subset M$;
\item[(b)] $\lambda_{1}(T, D)>0$ for every domain $D\subset M$;
\item[(c)] There exists a positive smooth function $f$ on $M$ satisfying the 
equation \linebreak $T f=0$.
\end{description}
\label{FCschoen}
\end{prop}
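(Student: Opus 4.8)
The plan is to prove the equivalence of (i), (ii), (iii) by the classical chain of implications (i) $\Rightarrow$ (iii) $\Rightarrow$ (ii) $\Rightarrow$ (i), since (ii) $\Rightarrow$ (i) is trivial. The implication (iii) $\Rightarrow$ (ii) is also essentially immediate from the variational characterization in Lemma \ref{minmax}: if $f>0$ solves $Tf=0$ on all of $M$, then for any domain $D$ and any test function $g\in C_0^\infty(D)$ one writes $g = f\psi$ with $\psi = g/f \in C_0^\infty(D)$, substitutes into the Rayleigh quotient $-\int_D g\,T(g)\,dM$, and integrates by parts using $Tf=0$; the zeroth-order and first-order cross terms cancel and one is left with a positive-definite quadratic form in $\nabla\psi$ of the shape $\int_D f^2 \langle \mathcal{A}\nabla\psi,\nabla\psi\rangle\,dM$ (where $\mathcal{A}$ is the positive-definite principal-part matrix of $T$, here $P_r$ when $T=L_r+\text{(zeroth order)}$). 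This is strictly positive unless $\psi$ is constant, which is impossible for nonzero $g\in C_0^\infty(D)$ on a domain; hence $\lambda_1^T(D)>0$.

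The substantive implication is (i) $\Rightarrow$ (iii): producing a global positive solution of $Tf=0$ from the sign condition on first eigenvalues. Here I would exhaust $M$ by a nested sequence of domains $D_1 \subset D_2 \subset \cdots$ with $\overline{D_k}$ compact and $\bigcup_k D_k = M$ (possible since $M$ is complete and noncompact). On each $D_k$, since $\lambda_1^T(D_k)\geq 0$, one can solve a Dirichlet problem $T f_k = 0$ in $D_k$ with $f_k > 0$; concretely, fix a point $p_0 \in D_1$ and consider the problem $Tf_k=0$ on $D_k\setminus\{p_0\}$ (or with a small prescribed boundary datum), normalized so that $f_k(p_0)=1$. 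The nonnegativity of the first eigenvalue is exactly what guarantees the maximum principle / solvability needed to get a positive solution on each $D_k$ (if $\lambda_1^T(D_k)$ were negative the sign-definiteness would fail). Then apply the Harnack inequality for the elliptic operator $T$ — valid because $T$ is second-order, self-adjoint, and uniformly elliptic on compact sets — to get, on each fixed compact set $K\subset M$, uniform two-sided bounds $C_K^{-1}\leq f_k \leq C_K$ for all large $k$. Combined with interior Schauder (or $L^p$) estimates, the sequence $\{f_k\}$ is precompact in $C^2_{\mathrm{loc}}(M)$, so a diagonal subsequence converges to a function $f \in C^2(M)$ with $Tf=0$ and $f(p_0)=1$; the Harnack bound forces $f>0$ everywhere. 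This is the standard Fischer-Colbrie–Schoen argument, and indeed the proposition is attributed to \cite{E} where it is presumably proved this way.

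The main obstacle, and the point requiring care, is the first step of (i) $\Rightarrow$ (iii): constructing the positive solutions $f_k$ on the exhausting domains. One cannot simply take eigenfunctions, because $\lambda_1^T(D_k)$ need not be zero — it is only $\geq 0$, and in general $>0$ by the equivalence (i) $\Leftrightarrow$ (ii). The trick is to enlarge the domain slightly or to use the monotonicity of $\lambda_1$ under domain inclusion together with a limiting/perturbation argument: one works on a domain $D$ with $\overline{D_k}\subset D$, uses that $\lambda_1^T(D) < \lambda_1^T(D_k)$ when $D_k\subsetneq D$, so that on $D_k$ the operator $T$ is "subcritical" and the Dirichlet problem $Tf_k=0$ in $D_k$, $f_k = 1$ on $\partial D_k$ has a unique solution which is positive by the maximum principle (the maximum principle holds precisely because $\lambda_1^T(D_k)>0$). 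Once these $f_k$ are in hand, the Harnack and elliptic-estimate machinery is routine. I would also remark that the statement and its proof use only structural features of $T$ (self-adjoint, second-order, uniformly elliptic on compacta), so it applies verbatim to $T = L_r + (\text{zeroth-order term})$ whenever the immersion is admissible, i.e. $P_r$ is definite — after possibly replacing $T$ by $-T$ so that its principal part $P_r$ is positive definite, which does not affect the sign of $\lambda_1$.
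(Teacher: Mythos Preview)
The paper does not actually prove this proposition: it is stated with attribution to \cite[Proposition (3.13)]{E} and no argument is given, only the remark that for the Laplacian the result is \cite[Theorem 1]{FCS}. So there is no proof in the paper to compare against. Your sketch is the standard Fischer-Colbrie--Schoen argument (exhaustion by compact domains, positive Dirichlet solutions via the maximum principle, Harnack inequality plus interior estimates, diagonal limit) and is correct in outline; this is presumably also what appears in \cite{E}. One organizational remark: in your step (i) $\Rightarrow$ (iii) you already invoke strict domain monotonicity to upgrade $\lambda_1^T(D_k)\ge 0$ to $\lambda_1^T(D_k)>0$, which is exactly (i) $\Rightarrow$ (ii); it would be cleaner to isolate (i) $\Rightarrow$ (ii) first and then run the exhaustion argument as (ii) $\Rightarrow$ (iii). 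Your computation for (iii) $\Rightarrow$ (ii) via the substitution $g=f\psi$ is the same one the paper carries out later in the proof of Proposition~\ref{fc}.
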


\section{Stability}
\label{stab-sf}

 We denote by  $\bar{M}^{n+1}(c)$ the  complete, simply connected Riemannian manifold with constant sectional curvature $c$, i.e. a Space Form and we consider an immersion $x:M^{n}\longrightarrow \bar{M}^{n+1}(c)$. We recall that, when the  ambient manifold is a Space Form,  $H_{r+1}$-hypersurfaces are critical points of the variational problem of minimizing the
integral $${\cal A}_{r}=\int_{M} F_{r}(H_{1},...,H_{r})dM,$$ for 
compactly supported variations (cf. \cite{BC, Re, Ro}). ${\cal A}_{r}$ is known as {\em $r$-area} of the hypersurface and the functions $F_{r}$ are
defined inductively by
$$\begin{array}{l}
F_{0}=1,\\
F_{1}=H_{1},\\
F_{r}=H_{r}+\frac{c(n-r+1)}{r-1}F_{r-2},\;\;\;2\leq r\leq n-1.
\end{array}$$

Associated to the second variation formula of the variational  problem described above  is
the second order \mbox{differential} operator 
$$L_{r}+\trace(A^2P_r)+c(n-r)H_{r},$$
where $L_{r} (f)=\trace(P_{r}(Hess(f)) $  was defined in  Proposition \ref{proposition-variation}.

\

When dealing with the classical case of constant mean curvature, $r=0$, we have $P_0=I$ and then $L_0=\trace (Hess f)$ is the Laplacian, which is elliptic. Lemma \ref{eleerre} gives conditions on the immersion that guarantee the definiteness of $P_r$ and since we want to deal with elliptic operators we need to restrict ourselves to the case in which $P_r$ is positive definite. It will be clear in a while that the case in which $P_r$ is negative definite can be addressed with slight modifications (see Remark \ref{remark-ellipticity}).

\begin{defi}
We say that $H_{r+1}$-hypersurface is {\it positive definite} if $P_r$ is positive definite.
\end{defi}

\begin{defi}{(stability operator in Space Forms)} The {\it stability operator} of a positive definite $H_{r+1}$-hypersurface in a Space Form $M^{n+1}(c)$ is defined by

 $$T_{r}=L_{r}+\trace(A^2P_r)+c(n-r)H_{r}$$

\end{defi}

\

\begin{defi}{(stability in Space Forms)} Let $x:M^n\longrightarrow \bar{M}^{n+1}(c)$  be a positive definite $H_{r+1}$-hypersurface. The immersion $x$ is {\it stable}  if $-\int_{M}fT_r(f)\;dM\geq 0$ for all variations of $x$.  If $M$ is 
noncompact, we say that $x$ is stable if for every domain $D\subset M$, $x|_D$ is stable. 
 \label{operador}
\end{defi}

\ 

\begin{rem}
 If we want to address the case in which $P_r$ is negative definite, the stability operator should be defined by $T_r^{\small Neg}:=-T_r=-\left(L_{r}+\trace(A^2P_r)+c(n-r)S_{r}\right)$.

  Looking at  the related  variational problem with the functional ${\cal A}_r$, we have:
$$
\left(\frac{d^2}{dt^2}{\cal A}_r\right)|_{t=0}=-\int_{M}f(L_{r}(f)+\trace(A^2P_r)f+c(n-r)H_{r}f)\; dM.
$$  
  We see that when $P_r$ is positive definite, then $\left(\frac{d^2}{dt^2}{\cal A}_r\right)|_{t=0}=-\int_{M}fT_r(f)\;dM$, and stability gives the usual notion
of minimum. When $P_r$ is negative definite, we have  $\left(\frac{d^2}{dt^2}{\cal A}_r\right)|_{t=0}=\int_{M}fT_r^{\small Neg}(f)\;dM$ and stability gives the maximum; this is equivalent to look for the minimum of the new variational problem
$-{\cal A}_r$. See (\cite{ADCE}), where a similar discussion takes place. 
\label{remark-ellipticity}
\end{rem}

\

Proposition \ref{fc} below gives a characterization of stability of a positive definite $H_{r+1}$-hypersurface in a Space Form.

\begin{prop}
\label{fc}
Let $x:M^n\longrightarrow \bar{M}^{n+1}(c)$ be a complete  positive definite $H_{r+1}$-hypersurface in a Space Form. Then  $M$ is stable if and only if there exists a positive function $u \in C^{2}(M^n)$ such that $T_r(u)\leq 0$. 
\label{estab}
\end{prop}

\begin{proof}

Let us assume that $M$ is stable. If $M$ is compact without boundary, Lemma \ref{minmax} implies that $\lambda_1(T_r,M)\geq 0$ and we can take $u \in C^{\infty}(M)$ to be the positive corresponding eigenfunction. Then, (\ref{tres}) yields $T_r(u)\leq 0$. If $M$ is complete and noncompact, stability and Lemma \ref{minmax} imply $\lambda_1(T_r,D)\geq 0$. Then Proposition \ref{FCschoen} gives the existence of $u$.

Now, let us assume the existence of a positive function $u \in C^{2}(M)$ such that $T_r(u)\leq 0$. 

  By hypothesis, there exists $u\in C^\infty (M)$, $u>0$, such that $L_{r}(u) + qu\leq 0$, where $q=\trace(A^2P_r)+c(n-r)S_{r}.$ Given $f\in C_c^\infty (M)$, the function $\varphi=\frac{f}{u}$ lies in $C_c^\infty (M)$. Integration by parts gives
 \begin{eqnarray*}
 &- &\int_M (L_r(f) +qf)f
 \;dM =\\
 &-&\int_M  \left(\varphi u\left< \nabla u,P_{r}(\nabla \varphi)\right>+ \varphi u^2 L_{r}(\varphi)+\varphi u\left<\nabla \varphi, P_r\nabla u\right>+\varphi^2 u L_r(u) +\varphi^2 u^2q\right)\;dM\geq\\
 &-&\int _{M} \left(\varphi u\left< \nabla u,P_{r}(\nabla \varphi)\right>+ \varphi u^2 L_{r}(\varphi)+\varphi u\left<\nabla \varphi, P_r\nabla u\right>\right)\;dM=
 \\
 &-&\int_{M}  \left(2\varphi u\left< \nabla u,P_{r}(\nabla \varphi)\right>+ \varphi u^2 L_{r}(\varphi) \right)\;dM = \int_{M} u^2\left< \nabla \varphi,P_{r}(\nabla \varphi)\right>\;dM\geq 0.
 \end{eqnarray*}
 Then $M$ is stable.
\end{proof}
 
 \
 
 In Space Forms, the linearized operator of the $(r+1)$-mean curvature equation (see Proposition \ref{sr}), $T_r$, is associated with a variational problem involving the $r$-area  of the immersion. In a general Riemannian manifold, since there is no known similar variational problem, we use the characterization of $T_r$ given by Proposition \ref{estab} in order to define stability.

\begin{defi}{(stability operator in a general Riemannian manifold)}
\label{operator-defi}
The {\it stability operator} of a positive definite $H_{r+1}$-hypersurface of $\bar{M}^{n+1}$ by
$$T_{r}=L_{r}+\trace(A^2P_r)+\trace(P_{r}\bar{R}_{N}).$$
\label{gstab}
\end{defi}

\begin{defi}{(stability in a general Riemannian manifold)}\label{defi-stability}
Let $x:M^n\longrightarrow \bar{M}^{n+1}$ be a positive definite $H_{r+1}$-hypersurface. We say that $M$ is {\em stable} if there exists a positive function $u\in C^\infty (M)$  such that $T_r u\leq 0$.  
\end{defi}

 By Proposition \ref{estab}, Definition \ref{defi-stability} agrees with the standard definition of stability for $H_{r+1}$-hypersurfaces in Space Forms and, for $r=0$, coincides with the classical notion of stability for constant mean curvature hypersurfaces. 
 Finally, Definition \ref{defi-stability} is also consistent
with the notion of stability in MOTS theory (see \cite[Definition 2]{AMS},\cite[Definition 3.1]{AEM}).

\

 When the ambient space is a Space Form, it can be shown that $\trace(\nabla P_r)=0$ in (\ref{CY}) and then 
\begin{equation}
L_{r}(f)=\dive(P_{r}\nabla f),
\label{div}
\end{equation}
 (see \cite[Theorem 4.1]{Ro}). In  this  case, we can use Stokes Theorem and the 
self-adjointness of $P_{r}$ to see that $L_{r}$ is self-adjoint.
 In a general Riemannian manifold  $\bar{M}^{n+1}$, equation (\ref{div}) may not hold and the stability operator $T_r$ (Definition \ref{gstab}) is not in general self-adjoint. Hence we are not allowed to apply the properties of the eigenvalues and the solutions of equation $T_r(g)+\lambda g=0$, which were described at the end of  Section {\ref{prelim}}. Nevertheless, it is possible to develop an eigenvalue theory and to obtain an interpretation of 
 stability in terms of a special eigenvalue of $T_r$, which is defined by

\begin{equation}
{\lambda}_p(T_r,D)=\sup\left\{ \inf_{p\in D}\left\{-\frac{T_r(u)(x)}{u(x)}  \right\} {\big \vert} u\in C^\infty_0 (D), \; u>0  \right\},
\label{evans}
\end{equation}
and is called the {\it principal eigenvalue} of $T_r$( 
See  \cite[(3.1)]{LiCoWa} with ${\mathcal L}_2=-T_r$).


 The following Proposition is a direct application of \cite[Theorem 3.1]{LiCoWa}.
 
 \begin{prop} 
 \label{principal-eigenvalue-prop}
 Let $x:M^n\longrightarrow \bar{M}^{n+1}$ be a positive definite $H_{r+1}$-hypersurface and  $D\subset M$. Then
 
 \begin{itemize}
\item[{\rm (a)}] the eigenvalue ${\lambda}_p(T_r,D)$ is real and there exists a positive eigenfunction  $g\in C^{\infty}_0(D).$ Furthermore, if ${\lambda}(T_r,D)\in\mathbb{C}$ is any other eigenvalue, we have $Re({\lambda}(T_r,D))\geq {\lambda}_p(T_r,D);$
 \item[{\rm (b)}] the eigenvalue ${\lambda}_p(T_r,D)$ is simple.
 \end{itemize}
 \label{principal}
 \end{prop}

The next proposition gives a characterization of stability in terms of ${\lambda}_p(T_r,D)$.

\begin{prop}
 Let $x:M^n\longrightarrow \bar{M}^{n+1}$ be a positive definite $H_{r+1}$-hypersurface.  Then $M$ is  stable if and only if ${\lambda}_p(T_r,D)\geq 0$ for all bounded $D\subset M$.
 \label{stability-eigenvalue}
\end{prop}
\begin{proof} The proof is analogous to the proof of \cite[Lemma 2]{AMS}.

If ${\lambda}_p(T_r,D)\geq 0$, let $\phi$ be the corresponding positive eigenfunction given by Proposition \ref{principal}. Then $T_r(\phi)\leq 0$ and $M$ is stable.

For the converse, let $T^*_r$ be the formal adjoint of $T_r$ (with respect to the standard $L^2$ inner product in $D$). $T^*_r$ also has ${\lambda}_p(T_r,D)$ as the principal eigenvalue and a positive corresponding eigenfunction $\phi^*$. Since $M$ is stable, there exists a positive function $u\in C^\infty (D)$  such that $T_r u\leq 0$ with $T_r u\not\equiv 0$.

Then
$$
{\lambda}_p(T_r,D)\int_D \phi^* u\;dM=-\int_D T^*_r(\phi^*)u\;dM=-\int_D \phi^*T_r(u)\;dM\geq 0.
$$
\end{proof}

 \section{Symmetric stability }
 
 \



In this section, we introduce the notion of symmetric stability (see Definition \ref{symm-stability}) for $H_k$-hypersurfaces, which is stated in terms of a self-adjoint operator. We are going to prove that stability (see Definition \ref{gstab}) implies symmetric stability.

 The next Proposition is inspired by \cite[ Lemma 4.8]{BGM} (see also the proof of \cite[ Theorem 2.1]{GS}).

\begin{prop}
\label{proposition-sym}
Let $(\Sigma,\langle,\rangle)$ be a Riemannian manifold and consider the following  differential operator on $\Sigma$ 
$$T=\dive(\Phi\nabla \cdot)+\langle \sqrt{\Phi}X,\sqrt{\Phi}\nabla\cdot\rangle+q,$$

where $X$ is a vector field in $\Sigma$ and $q$ is a continuous function on $\Sigma$. Let us suppose that $\Phi$ is a positive definite self-adjoint operator  and that there exists $u\in C^2(\Sigma)$, $u>0$, with $Tu\leq 0$. Then, the operator
$$
T^S=div(\Phi\nabla \cdot)+Q,\hspace{1cm} Q=q-\frac{1}{2}div(\Phi X)-\frac{|\sqrt{\Phi}X|^2}{4}.
$$
satisfies $-\int_\Sigma f{T^S}f\;d\Sigma\geq 0$ for all  compactly supported function $f\in C^2(\Sigma)$.
\label{stab}
\end{prop}

\begin{proof}
Let $u>0$ be such that $Tu\leq 0$. The inequality $Tu\leq 0$ gives
$$
div(\Phi\nabla u)+\frac{u}{4} |2\sqrt{\Phi}\nabla \ln u+\sqrt{\Phi}X|^2  - \frac{u}{4}|\sqrt{\Phi}X|^2-u|\sqrt{\Phi}\nabla \ln u|^2 +qu\leq 0.
$$

If we write $u=e^g$, the previous inequality can be rewritten as
$$
\dive(\Phi\nabla g)+\frac{1}{4} |2\sqrt{\Phi}\nabla g+\sqrt{\Phi}X|^2  - \frac{1}{4}|\sqrt{\Phi}X|^2 +q\leq 0.
$$

Adding and subtracting $\frac{1}{2} \dive(\Phi X)$ we obtain
$$
\dive(\Phi(\nabla g +X/2))+ |\sqrt{\Phi}(\nabla g+X/2)|^2  - \frac{1}{4}|\sqrt{\Phi}X|^2 -\frac{1}{2} \dive(\Phi X) +q\leq 0.
$$

Now, we define $Y:=\nabla g +X/2$ and $Q:=q- \frac{1}{4}|\sqrt{\Phi}X|^2 -\frac{1}{2} \dive(\Phi X)$ and the previous inequality writes as

$$
|\sqrt{\Phi} Y|^2+Q\leq -\dive(\Phi Y).
$$

 Then for  $f\in C^2(\Sigma)$ with compact support,  we obtain

$$
\begin{array}{rcl}
f^2|\sqrt{\Phi} Y|^2+f^2Q&\leq& -f^2\dive(\Phi Y)\\[8pt]
                          &=& -\dive(f^2(\Phi Y))+2f\langle\nabla f,\Phi Y\rangle\\[8pt]
                          &=& -\dive(f^2(\Phi Y))+2f\langle\sqrt{\Phi}\nabla f,\sqrt{\Phi} Y\rangle\\[8pt]
                          &\leq&  -\dive(f^2(\Phi Y))+2|f|                           \; |\sqrt{\Phi}\nabla f|\;|\sqrt{\Phi} Y       |\\[8pt]
                          &\leq& -\dive(f^2(\Phi Y))+                                         |\sqrt{\Phi}\nabla f|^2 +f^2|\sqrt{\Phi} Y       |^2  .
                                             
 \end{array}
$$

By integrating both sides of the last inequality we obtain

$$
\int_\Sigma |\sqrt{\Phi}\nabla f|^2-f^2Q\;d\Sigma\geq 0,  \;\;\;\; \forall \mbox{  compactly supported function } f\in C^2(\Sigma).
$$
or equivalently
$$
-\int_\Sigma f\dive(\Phi\nabla f)+f^2Q \;d\Sigma=-\int_\Sigma f{ T^S}f\;d\Sigma\geq 0, \;\;\;\;\forall \mbox{  compactly supported function } f\in C^2(\Sigma).
$$
\end{proof}

\begin{coro}Let $x:M^n\longrightarrow \bar{M}^{n+1}$ be a positive definite $H_{r+1}$-hypersurface and suppose that $M$ is stable. Let
$$
T^S_r=\dive(P_r\nabla \cdot)+\trace(P_r A^2)+\trace(P_{r}\bar{R}_{N})-\frac{1}{2}\dive(P_r X)-\frac{|\sqrt{P_r}X|^2}{4},
$$

where $X=- P_r^{-1}\trace(\nabla P_r)$. Then $T^S_r$ is self-adjoint and satisfy
$-\int_M fT^S_rf\;dM\geq 0$ for all for all  compactly supported function $f\in C^2(M)$.
\label{symm}
\end{coro}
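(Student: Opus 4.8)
The plan is to show that Corollary \ref{symm} is an immediate application of Proposition \ref{stab} once we correctly identify the operator $T_r$ with the operator $T$ appearing in that proposition. First I would recall from \eqref{CY} that $L_r(f)=\mathrm{div}(P_r\nabla f)-\langle\mathrm{trace}(\nabla P_r),\nabla f\rangle$, so that the $r$-stability operator of an admissible $H_{r+1}$-hypersurface with $P_r$ positive definite can be written as
$$
T_r=\mathrm{div}(P_r\nabla\cdot)-\langle\mathrm{trace}(\nabla P_r),\nabla\cdot\rangle+q,\qquad q=\mathrm{trace}(P_rA^2)+\mathrm{trace}(P_r\bar R_N).
$$
To match this with the form $T=\mathrm{div}(\Phi\nabla\cdot)+\langle\sqrt\Phi X,\sqrt\Phi\nabla\cdot\rangle+q$ of Proposition \ref{stab}, I set $\Phi=P_r$ (which is positive definite and self-adjoint, hence admits a positive definite self-adjoint square root $\sqrt{P_r}$) and I need a vector field $X$ with $\langle\sqrt{P_r}X,\sqrt{P_r}\nabla\cdot\rangle=-\langle\mathrm{trace}(\nabla P_r),\nabla\cdot\rangle$, i.e. $\langle P_rX,\nabla\cdot\rangle=-\langle\mathrm{trace}(\nabla P_r),\nabla\cdot\rangle$; since $P_r$ is invertible this forces exactly $X=-P_r^{-1}\mathrm{trace}(\nabla P_r)$, which is the vector field in the statement.

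Next I would invoke the $r$-stability hypothesis: by Definition \ref{defi-stability} there exists $u\in C^\infty(M)$, $u>0$, with $T_ru\le 0$ (the condition $T_ru\not\equiv 0$ is not needed here). Thus all hypotheses of Proposition \ref{stab} are met with $\Sigma=M$, $\Phi=P_r$, $X=-P_r^{-1}\mathrm{trace}(\nabla P_r)$, and $q=\mathrm{trace}(P_rA^2)+\mathrm{trace}(P_r\bar R_N)$. Applying the proposition, the symmetrized operator
$$
{\cal T}=\mathrm{div}(P_r\nabla\cdot)+Q,\qquad Q=q-\tfrac12\,\mathrm{div}(P_rX)-\tfrac14|\sqrt{P_r}X|^2,
$$
satisfies $-\int_M f\,{\cal T}f\ge 0$ for all $f\in C_0^2(M)$. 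But this ${\cal T}$, with the chosen $X$, is precisely ${\cal T}_r$ as written in the Corollary, so the claim follows. For the case in which $P_r$ is negative definite, $T_r$ is by definition the negative of the above operator; one applies Proposition \ref{stab} to $-P_r$ (now positive definite) with the analogously adjusted $X$ and $q$, and the conclusion is the same up to the sign already built into the definition of ${\cal T}_r$.

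The only genuinely delicate point—and the step I would be most careful about—is the bookkeeping of signs and of the square-root operator: one must check that $\sqrt{P_r}$ is well-defined (true since $P_r$ is symmetric positive definite, being simultaneously diagonalizable with the positive eigenvalues $S_r(A_i)$ from Lemma \ref{calculos}(i)), that $|\sqrt{P_r}X|^2=\langle P_rX,X\rangle=\langle P_r^{-1}\mathrm{trace}(\nabla P_r),\mathrm{trace}(\nabla P_r)\rangle$ is the quantity intended, and that the identification $\langle\sqrt{P_r}X,\sqrt{P_r}\nabla f\rangle=\langle P_rX,\nabla f\rangle$ used to pin down $X$ is exactly the term produced by \eqref{CY}. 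There is no hard analysis here; the content is entirely in Proposition \ref{stab}, and the Corollary is a dictionary translation. I would therefore keep the proof to two or three lines: write $T_r$ via \eqref{CY}, identify $\Phi$, $X$, $q$, quote Proposition \ref{stab}, and remark on the negative-definite case.
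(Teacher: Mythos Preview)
Your proposal is correct and follows essentially the same approach as the paper: rewrite $T_r$ via \eqref{CY}, use the self-adjointness of $P_r$ to put the first-order term in the form $\langle\sqrt{P_r}X,\sqrt{P_r}\nabla\cdot\rangle$ with $X=-P_r^{-1}\mathrm{trace}(\nabla P_r)$, set $q=\mathrm{trace}(P_rA^2)+\mathrm{trace}(P_r\bar R_N)$, and invoke Proposition~\ref{stab} using the positive $u$ furnished by $r$-stability. The paper's own proof is exactly this dictionary translation, treating the positive-definite case and declaring the negative-definite case analogous, just as you do.
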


\begin{proof}

We recall that the stability operator is given by
$$
T_{r}=L_{r}+\trace(P_r A^2)+\trace(P_{r}\bar{R}_{N}),
$$

which, in view of equation \eqref{CY} and of the fact that $P_r$ is a self-adjoint operator, can be rewritten as

$$
\begin{array}{rcl}
T_{r}&=&\dive(P_{r}\nabla \cdot)-\left<\trace(\nabla P_r),\nabla \cdot\right>+\trace(P_r A^2)+\trace(P_{r}\bar{R}_{N})\\[8pt]
     &=& \dive(P_{r}\nabla \cdot)-\left<P_r P_r^{-1}\trace(\nabla P_r),\nabla \cdot\right>+\trace(P_r A^2)+\trace(P_{r}\bar{R}_{N})\\[8pt]
     &=& \dive(P_{r}\nabla \cdot)-\left<\sqrt{P_r} P_r^{-1}\trace(\nabla P_r),\sqrt{P_r}\nabla \cdot\right>+\trace(P_r A^2)+\trace(P_{r}\bar{R}_{N}).
     \end{array}
$$

Now we define $X=- P_r^{-1}\trace(\nabla P_r)$ and $q=\trace(P_r A^2)+\trace(P_{r}\bar{R}_{N})$ and the result follows by Proposition \ref{stab}.
\end{proof}

Then we can give the following Definition.

\begin{defi}
\label{sso}
 The {\it symmetrized stability operator} of a positive definite $H_{r+1}$-hypersurface in  $M^{n+1}$ is defined by $$T^S_r=\dive(P_r\nabla \cdot)+\trace(P_r A^2)+\trace(P_{r}\bar{R}_{N})-\frac{1}{2}\dive(P_r X)-\frac{|\sqrt{P_r}X|^2}{4}.$$

\

where $X=- P_r^{-1}\trace(\nabla P_r)$.
\end{defi}

\

Also, we define:

\begin{defi}{(Symmetric stability in a general Riemannian manifold)}\label{symm-stability}
Let $x:M^n\longrightarrow \bar{M}^{n+1}$ be a positive definite $H_{r+1}$-hypersurface.The immersion $x$ is  {\em symmetric-stable}  if $-\int_{M}fT^S_r(f)\;dM\geq 0$ for all variations of $x$.  If $M$ is noncompact, we say that $x$ is symmetric stable if for every domain $D\subset M$, $x|_D$ is symmetric stable. 
 \label{symstable}
\end{defi}

 Notice that,  Definitions \ref{operador}, \ref{defi-stability} and \ref{symstable} are consistent with the notion of {\bf strong stability} in the classical literature about constant mean curvature hypersurfaces.  The other classical notion is that of {\bf weak stability} that corresponds to the further assumption   $\int_M f\;dM= 0$.

\

\begin{prop}\label{eigenvalues}
Let $x:M^n\longrightarrow \bar{M}^{n+1}$ be a positive definite $H_{r+1}$-hypersurface. 
Then, for any domain $D\subset M,$ one has:
$$\lambda_p (T_r, D)\leq \lambda_1 (T^S_r, D). $$
\label{compare-eigenvalues}
\end{prop}

\begin{proof} The proof is inspired in the proof of [Theorem 2.1,\cite{GS}]. 

Let $f_0 \in C_0^\infty(D)$ be the first eigenfunction associated to the eigenvalue $\lambda(T_r, D),$ and let $\widetilde{f}=\log f_0 \in C^\infty(D)$. We have \begin{eqnarray}
 &-&\lambda_p (T_r, D)= \frac{T_rf_0}{f_0}=\frac{1}{f_0}\dive\left(P_r\nabla f_0\right)-\left<{\rm trace}(\nabla P_r),\frac{\nabla f_0}{f_0}\right>+{\rm trace}(P_r A^2+\bar{R}_{N}) \nonumber \\
&=& {\rm \dive}\left(P_r\frac{\nabla f_0}{f_0}\right)-\left<P_r\nabla f_0,\nabla\left(\frac{1}{f_0}\right)\right>+\left<P_r\left(-P_r^{-1}{\rm trace}(\nabla P_r)\right),\nabla\widetilde{f}\right>+{\rm trace}(P_r A^2+\bar{R}_{N}) \nonumber \\
&=& {\rm div}\left(P_r\left(\nabla\widetilde{f}+\frac{X}{2}\right)\right)+\left\vert{\sqrt{P_r}\left(\nabla\widetilde{f}+\frac{X}{2}\right)}\right\vert^2+{\rm trace}(P_r A^2+\bar{R}_{N})-\frac{\vert\sqrt{P_r}X\vert^2}{4}-\frac{{\rm div}\left(P_rX\right)}{2}. \nonumber
\end{eqnarray}
Now, we set $Y_r=\nabla\widetilde{f}+\frac{X}{2}$ and $Q={\rm trace}(P_r A^2+\bar{R}_{N})-\frac{\vert\sqrt{P_r}X\vert^2}{4}-\frac{\dive\left(P_rX\right)}{2}$. Thus, if $f \in C_0^\infty(D)$, $f\not\equiv 0$, we obtain

\begin{eqnarray}
&-&\lambda_p (T_r, D) \int_D f^2 dM= \int_D f^2\dive\left(P_rY_r\right)+f^2\vert\sqrt{P_r}Y_r\vert^2+f^2Q\,dM \nonumber \\
&=& \int_D -\left<P_rY_r,\nabla\left(f^2\right)\right>+f^2\vert\sqrt{P_r}Y_r\vert^2+Qf^2\,dM+\int_D \dive\left(f^2P_rY_r\right)\,dM \nonumber \\
&=& \int_D -2f\left<\sqrt{P_r}\nabla f,\sqrt{P_r}Y_r\right>+f^2\vert\sqrt{P_r}Y_r\vert^2+f^2Q\,dM.\nonumber\\
&=& \int_D |\sqrt{P_r}\nabla f-f\sqrt{P_r}Y_r|^2- |\sqrt{P_r}\nabla f|^2-f^2|\sqrt{P_r}Y_r|^2+f^2\vert\sqrt{P_r}Y_r\vert^2+f^2Q\,dM.\nonumber\\
&=& \int_D |\sqrt{P_r}\nabla f-f\sqrt{P_r}Y_r|^2- |\sqrt{P_r}\nabla f|^2+f^2Q\,dM.\nonumber\\
&\geq& \int_D  -|\sqrt{P_r}\nabla f|^2+f^2Q\,dM=\int_D  f\dive(P_r\nabla f)+f^2Q\,dM.\nonumber
\end{eqnarray}

Then $$\lambda_p (T_r, D)  \int_D f^2 dM\leq -\int_D fT^S_r(f) dM$$
and the result now follows from Rayleigh quotient characterization for $\lambda (T^S_r, D)$.
\end{proof}

\
Corollary \ref{symm} and Proposition \ref{compare-eigenvalues} yield that stability implies symmetric stability. Also, by using  properties of $\lambda_1(T^S_r,D)$ we obtain the following result.

\begin{prop}
Let $x:M^n\longrightarrow \bar{M}^{n+1}$ be a complete  positive definite $H_{r+1}$-hypersurface and suppose that $M$ is stable. Then:

\begin{description}
\item [(a)]$-\int_{M} fT^S_rf\;dM\geq 0$ for all $f\in C^2_0(M)$.
\item [(b)]  $\lambda_1(T^S_r,D)\geq 0$ for every $D\subset M$.
\item [(c)] There exists a positive smooth function $f$ on $M$ satisfying the equation $T^S_r f\leq 0$.
\end{description}
\end{prop}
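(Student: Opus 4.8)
The plan is to derive all three items from \textbf{Corollary \ref{symm}} together with \textbf{Proposition \ref{FCschoen}}, so the only real work is checking that the hypotheses of those results are in force and that the symmetrized operator ${\cal T}_r$ is genuinely elliptic and self-adjoint. First I would note that since $M$ is $r$-stable, Definition \ref{defi-stability} provides a positive $u\in C^\infty(M)$ with $T_ru\le 0$; this is exactly the hypothesis needed to invoke Corollary \ref{symm}, which yields $-\int_\Sigma f{\cal T}_rf\,dM\ge 0$ for all $f\in C_0^2(\Sigma)$ (equivalently for all $f\in C_c^\infty(M)$ by density). That gives (i) immediately, in both the positive-definite and the negative-definite cases (in the latter, ${\cal T}_r$ carries the extra sign as in Definition \ref{sso}, and Corollary \ref{symm} is stated to cover that case as well).

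Next I would observe that ${\cal T}_r=\pm\big(\mathrm{div}(P_r\nabla\cdot)+Q\big)$ with $Q=q-\tfrac12\mathrm{div}(P_rX)-\tfrac14|\sqrt{P_r}X|^2$ is a second-order, self-adjoint, elliptic linear operator: ellipticity is precisely the positive-definiteness of $P_r$ (which holds by admissibility), self-adjointness follows from the divergence form of the principal part together with the self-adjointness of $P_r$ and Stokes' theorem, exactly as recorded in the paragraph preceding the statement. With ${\cal T}_r$ self-adjoint and elliptic, Lemma \ref{minmax} applies and gives
$$
\lambda_1^{{\cal T}_r}(D)=\inf\left\{\frac{-\int_D f{\cal T}_r(f)\,dM}{\int_D f^2\,dM}\;:\;f\in H^1(D),\ f\not\equiv 0\right\}.
$$
By part (i), the numerator is nonnegative for every admissible $f$, so $\lambda_1^{{\cal T}_r}(D)\ge 0$ for every domain $D\subset M$; this is (ii). Here I would use $\lambda_1^{{\cal T}_r}$ in place of the notation $\lambda^{\cal T}_r$ appearing in the statement.

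Finally, for (iii) I would apply \textbf{Proposition \ref{FCschoen}} to the operator $T={\cal T}_r$: it is second-order, self-adjoint and elliptic, $M$ is complete and noncompact (the ambient setting under which these stability results are being developed), and by (ii) condition (i) of Proposition \ref{FCschoen} holds; hence condition (iii) of that proposition holds, i.e. there is a positive smooth $f$ on $M$ with ${\cal T}_rf=0$. The main obstacle — really the only subtle point — is making sure the regularity and completeness hypotheses of Proposition \ref{FCschoen} are legitimately available here: the coefficient $Q$ involves $\mathrm{div}(P_rX)$ with $X=-P_r^{-1}\,\mathrm{trace}(\nabla P_r)$, so one should check that $Q$ is continuous (which it is, $P_r$ being smooth and, by admissibility, invertible), and one should either assume $M$ complete noncompact outright or restrict the conclusion (iii) accordingly. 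Given those standing assumptions, the three items follow in the order (i) $\Rightarrow$ (ii) $\Rightarrow$ (iii) with no further computation.
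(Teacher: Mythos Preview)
Your proposal is correct and follows essentially the same route as the paper: the authors simply state the proposition ``in view of Corollary \ref{symm} and of Proposition \ref{FCschoen}'' without further argument, and your write-up just makes explicit the intermediate use of Lemma \ref{minmax} and the verification that ${\cal T}_r$ is elliptic and self-adjoint. Your remarks about needing $M$ complete noncompact for Proposition \ref{FCschoen} and about the regularity of $Q$ are reasonable caveats that the paper leaves implicit.
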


\

We notice that, when the ambient space is a Space Form $\bar{M}^{n+1}(c)$, then $T^S_r=T_r$ and and both notions of stability, namely stability and symmetric stability, coincide.

\section{Some applications}

In the next theorem, we give a lower bound for the real
 eigenvalue of the stability operator $T_r$. The proof is inspired by \cite[Theorem 3.2]{BJLM}.

\

\begin{thm}
\label{thm_pacelli} Let $\bar{M}^{n+1}$ a riemannian manifold with negative sectional curvature and 
let $x:M^n\longrightarrow \bar{M}^{n+1}$ be   a positive definite $H_{r+1}$-hypersurface. Assume  that  the second fundamental form of the immersion satisfies $ 0<\frac{\trace(P_r A^2)}{\trace(P_r)}(p)\leq -Sec(\bar{M})$, where $Sec(\bar{M})$ is the infimum of the sectional curvatures of $\bar{M}^{n+1}$.  Let $B_{\bar{M}}(p,R)$ be a geodesic ball centered at $p\in \bar{M}^{n+1}$ of radius $R$ and let $\Omega$ be a connected component of $x^{-1}(\overline{B_{\bar{M}}(p,R)})$. We have

 $${\lambda}_p(T_r,\Omega)\geq \frac{2}{R^2} \left\{(n-r)\inf_{\Omega}H_r- (r+1)H_{r+1}R\right\}.$$

\end{thm}

  \begin{proof}
  Let $g:B_{\bar{M}}(p,R)\to\R$ given by $g=R^2-\rho^2$, where $\rho(\cdot)=(dist(\cdot,p))$ is the distance function in $\bar{M}$ and set $f=g\circ x$. Let $e_1, e_2,\ldots,e_n$ be the eigenvectors of $P_r$ 
  and $\mu_i^r)$ the corresponding eigenvalues. We can write (see \cite[(24)]{BJLM}).
  $$
  L_r(f)=\displaystyle{\sum_{i=1}^n \mu_i^r.Hess\; g(e_i,e_i)}+ (r+1)H_{r+1}.\left<\nabla g,N\right>.
  $$

By definition (see \ref{evans}), we see that
  \begin{eqnarray*}
{\lambda}_p(T_r,\Omega)&\geq & \inf_{\Omega}\left\{-\frac{T_r(f)}{f}\right\}\\[8pt]
&=&
\inf_{\Omega}\left\{-\frac{1}{g}\left[  \displaystyle{\sum_{i=1}^n \mu_i^rHess\ g(e_i,e_i)}+ (r+1)H_{r+1}\left<\nabla g,N\right>\right]\right.\\[8pt]& &\Biggl. \Biggl.-\trace(P_r A^2)-\trace(P_{r}\bar{R}_{N})\Biggr\}
\end{eqnarray*}

By using the hypothesis on the sectional curvature, we have that $\trace(P_r A^2)+\trace(P_{r}\bar{R}_{N})\leq 0$ and we obtain

\begin{eqnarray}
\label{ineq1}
{\lambda}_p(T_r,\Omega)&\geq & 
\inf_{\Omega}\left\{-\frac{1}{g}\left[ \displaystyle{\sum_{i=1}^n \mu_i^rHess\  g(e_i,e_i)}+ (r+1)H_{r+1}\left<\nabla g,N\right>\right]\right\}.
\end{eqnarray}

 Now, we follow  the proof of \cite[,Theorem 3.2]{BJLM}.  For the sake of completeness we sketch the argument here. 
 The Hessian of $g$ is given by
 
 \begin{equation*}
 Hess\  g(e_i,e_i)=-2\langle\nabla\rho, e_i\rangle^2-2\rho Hess\rho(e_i,e_i).
 \end{equation*}
 
 Moreover, by Hessian comparison Theorem \cite[Theorem 1.1]{SY} and \cite[Lemma 5.2]{BJLM}   
 
 \begin{equation*}
 \sum_{i=1}^n\mu_i^r[\langle\nabla\rho, e_i\rangle^2+\rho Hess\rho(e_i,e_i)]\geq  \sum_{i=1}^n\mu_i^r[\Vert e_i^T\Vert^2+\rho V(\rho)
 \Vert e_i^{\perp}\Vert^2]
  \end{equation*}

 where $e_i^T=\langle\nabla\rho, e_i\rangle\nabla\rho,$ $e_i^{\perp}=e_i-e_i^T$ and $V(\rho)=-\sqrt{-Sec(\bar M)}\coth \sqrt{-Sec(\bar M)}\rho$
 
 Replacing the previous relation in \eqref{ineq1}, one has 
 
 \begin{eqnarray*}
\label{ineq2}
{\lambda}_p(T_r,\Omega)&\geq  
2{\displaystyle\inf_{\Omega}} \frac{1}{R^2-\rho^2}\left[ \displaystyle{\sum_{i=1}^n \mu_i^r[\Vert e_i^T\Vert^2+\rho V(\rho)
 \Vert e_i^{\perp}\Vert^2]-(r+1)RH_{r+1}}\right]\\&\geq
 \frac{2}{R^2}{\displaystyle\inf_{\Omega}} \left[\displaystyle{\sum_{i=1}^n \mu_i^r-(r+1)RH_{r+1}}\right]\\&=
  \frac{2}{R^2}\left[\displaystyle{(n-r){\displaystyle\inf_{\Omega}}H_r-(r+1)RH_{r+1}}\right]
\end{eqnarray*}

 where in the second inequality we use that $\rho V(\rho)\geq 1$ and  in the last equality we use that $\trace(P_r)=(n-r)H_r.$
 \end{proof}

  An immediate consequence of Theorem \ref{thm_pacelli}  is the following stability result.
 
 \begin{coro}
 Assume $x:M^n\longrightarrow \bar{M}^{n+1}$ satisfies the hypothesis of {\rm Theorem \ref{thm_pacelli}}. Moreover assume that  
 $(n-r)\inf_{\Omega}H_r- (r+1)H_{r+1}R>0.$ Then, any connected domain of $M$ contained in $x^{-1}(\overline{B_{\bar{M}}(p,R)}),$ where $B_{\bar{M}}(p,R)$ is a geodesic ball of $\bar{M}^{n+1},$ is stable.

 \end{coro}

 In the following, we get some results about the stability of graphs and rotational spheres.
\
\begin{lem}
\label{lemma-0-eigenvalue}
Let $x:M^n\longrightarrow \bar{M}^{n+1}$  be a  positive definite $H_{r+1}$-hypersurface in a manifold $ \bar{M}^{n+1}$  carrying a killing vector field ${\bf X}.$ Let ${\bf N}$ be a unit normal vector field defined on $M.$ Then $T_r\langle {\bf N}, {\bf X}\rangle=0,$ where $T_r$ is the stability operator of $M.$
\end{lem}

\begin{proof}
 As $X$ is  a Killing vector field, the associated variation is an isometry, hence
 all the hypersurfaces obtained by the variation have the same $(r+1)$-mean curvature.
Then, by Proposition  \ref{proposition-variation}, $T_r\langle {\bf N}, {\bf X}\rangle=0.$
\end{proof}

A direct consequence of Lemma \ref{lemma-0-eigenvalue} and of Definition \ref{defi-stability} is the following result.

\begin{prop}
Let  $x:M^n\longrightarrow \bar{M}^{n+1}$ be a positive definite $H_{r+1}$-hypersurface in a manifold $ \bar{M}^{n+1}$  carrying a killing vector field ${\bf X}.$ Let ${\bf N}$ be a unit normal vector field defined on $M$ and assume that $\langle {\bf N}, {\bf X}\rangle>0.$ Then $M$  is stable.
\end{prop}


\begin{prop}
Let $x:M^n\longrightarrow \Sigma^n\times{\mathbb R}$ be a  positive definite $H_{r+1}$-hypersurface   that is a graph over $\Sigma^n$.
Then $M$ is stable.
\end{prop}

\begin{proof} 
The proof is trivial because the vertical vector field $\frac{\partial}{\partial t}$ in  $\Sigma^n\times{\mathbb R}$ is a Killing vector field and $\langle N,\frac{\partial}{\partial t}\rangle$ may be choose to be positive for a graph.
\end{proof}

Let $M\subset \bar M.$ We say that an isometry $\Phi$ of $\bar M$ preserves $M$ if $\Phi (M)=M.$
 
\begin{prop}
Assume that $x:M^n\longrightarrow \bar{M}^{n+1}$ is a compact  positive definite $H_{r+1}$-hypersurface and that there exist  two independent Killing vector fields in $\bar M^{n+1}$ whose associated isometries  do not preserve $M.$
Then  $M$ is not  stable. 
\label{prop4.5}
\end{prop}

\begin{proof}By Lemma \ref{lemma-0-eigenvalue}, $0$ is an eigenvalue for $T_r$ and each Killing vector field generate an associated eigenfunction. Then $0$ has at least multiplicity two. Assume that  $M$ is a compact and stable positive definite $H_{r+1}$-hypersurface. By taking $D=M$ in Proposition \ref{principal-eigenvalue-prop}, we conclude that $0$ can not be the principal eigenvalue, since it is not simple, and that  the principal eigenvalue  must be negative. This is a contradiction to Proposition \ref{stability-eigenvalue}. 
\end{proof}

In the next  theorem  we prove that compact embedded rotational $H_{r+1}$-spheres of ${\mathbb H}^n\times{\mathbb R}$  or ${\mathbb S}^n\times{\mathbb R}$ are not  stable.  Rotational hypersurfaces in such spaces  are defined  as those which are foliated by horizontal geodesic spheres centered at an axis  $\{0\}\times\mathbb{R},$ $0\in {\mathbb H}^n$ or $0\in {\mathbb S}^n$. In \cite{ElSa,  Ron}, besides other examples, the authors construct embedded rotational $H_{r+1}$-spheres in ${\mathbb H}^n\times\mathbb{R}$ and ${\mathbb S}^n\times\mathbb{R}$. Also, they prove that any compact connected embedded $H_{r+1}$-hypersurface in these spaces is  a rotational sphere.

 \

%
%

%
%

\begin{thm}
Embedded rotational $H_{r+1}$-spheres in ${\mathbb H}^n\times{\mathbb R}$ or in ${\mathbb S}^n\times{\mathbb R}$ are not stable.
\end{thm}

\begin{proof}
We first notice that an embedded rotational $H_{r+1}$-sphere in one of these spaces is  strictly convex, hence it is positive definite (see \cite[Theorem 1 and Theorem 4]{FMS}). By construction, it  has two points on the rotational axis.
Also, we can easily see that the embedded rotational $H_{r+1}$-sphere is not preserved by  vertical translation and by at least one "horizontal" isometry.  By "horizontal" isometry we mean the isometry in ${\mathbb H}^n\times{\mathbb R}$ (respectively  ${\mathbb S}^n\times{\mathbb R}$), induced by an isometry in ${\mathbb H}^n$(respectively in ${\mathbb S}^n$). Then, Proposition \ref{prop4.5} gives the result.

 \end{proof} 
 
%

In Space Forms, spheres  are weakly  stable \cite{BDE, BC}.  In \cite{Sou}, R. Souam proves that   embedded rotational $H_{1}$-spheres in  ${\mathbb H}^2\times{\mathbb R}$ are  weakly stable and gives a value $h_0$ of the mean curvature such that   for $H_1>h_0$  rotational $H_{1}$-spheres in  ${\mathbb S}^2\times{\mathbb R}$ are weakly stable. Recentely Souam's result was extended to higher dimension by R. F. de Lima   and the authors \cite{DEN}.  Hence  a natural question arises.

\

{\em Question.}  Under which conditions embedded rotational $H_{r+1}$-spheres  in 
${\mathbb H}^n\times{\mathbb R}$ or ${\mathbb S}^n\times{\mathbb R}$  are  weakly stable?








\end{document}